\documentclass{llncs}

	\usepackage{amsmath,amssymb,url,graphicx,rotating,ifthen,algorithm,algorithmic,graphicx,epsfig,multirow,subfigure,array,caption,color,enumerate}
	\usepackage{ifthen}

	\def\E{{\mathbb E}}
	\def\P{{\mathbb P}}
	
	\def\chi{{\mathbf 1}}

	\def\w{\omega }
	\def\R{{\mathbb R}}

	\def\CQFD{\fbox\\}

	\sloppy

	\begin{document}

	\title{Noisy Optimization: Convergence with a Fixed Number of Resamplings}
	\author{
        Marie-Liesse Cauwet
	}
	\institute{TAO (Inria), LRI, UMR 8623 (CNRS - Univ. Paris-Sud), France}
	\maketitle

	\begin{abstract}
It is known that evolution strategies in continuous domains might not converge in the presence of noise \cite{stocopti5,augerupu}. It is also known that, under mild assumptions, and using an increasing number of resamplings, one can mitigate the effect of additive noise \cite{bignoise2} and recover convergence. We show new sufficient conditions for the convergence of an evolutionary algorithm with constant number of resamplings; in particular, we get fast rates (log-linear convergence) provided that the variance decreases around the optimum slightly faster than in the so-called  multiplicative noise model.

	Keywords: Noisy optimization, evolutionary algorithm, theory.
	\end{abstract}

	\section{Introduction}
Given a domain $\mathcal{D}\in \R^d$, with $d$ a positive integer, a noisy objective function is a stochastic process $f$ : $(x,\w)\mapsto f(x,\w)$ with $x\in\mathcal{D}$ and $\w$ a random variable independently sampled at each call to $f$. Noisy optimization is the search of $x$ such that $\E\left[ f(x,\w)\right]$ is approximately minimum. Throughout the paper, $x^{*}$ denotes the unknown exact optimum, supposed to be unique. 
For any positive integer $n$, $\tilde{x}_{n}$ denotes the search point used in the $n^{th}$ function evaluation.
We here consider black-box noisy optimization, i.e we can have access to $f$ only through calls to a black-box which, on request $x$, (i) randomly samples $\w$ (ii) returns $f(x,\w)$.
Among zero-order methods proposed to solve noisy optimization problems, some of the most usual are evolution strategies; \cite{abinvestigation} has studied the performance of evolution strategies in the presence of noise, and investigated its robustness by tuning the population size of the offspring and the mutation strength. Another approach consists in using resamplings of each individual (averaging multiple resamplings reduces the noise), rather than increasing the population size. Resampling means that, when evaluating $f(x,\w)$, several independent copies $\w_1,\dots,\w_r$ of $\w$ are used (i.e. the black-box oracle is called several times with a same $x$) and we use as an approximate fitness value $\frac1r \sum_{i=1}^r f(x,\w_i)$ in the optimization algorithm. The key point is how to choose $r$, number of resamplings, for a given $x$.
Another crucial point is the model of noise. Different models of noise can be considered: additive noise (Eq. \ref{additive}), multiplicative noise (Eq. \ref{multip}) or a more general model (Eq. \ref{ourmodel}). Notice that, in Eq. \ref{ourmodel} when $z>0$, the noise decreases to zero near the optimum; this setting is not artificial as we can observe this behavior in many real problems. 

Let us give an example in which the noise variance decreases to zero around the optimum. Consider a Direct Policy Search problem, i.e. the optimization of a parametric policy on simulations. Assume that we optimize the success rate of a policy. Assume that the optimum policy has a success rate 100\%. Then, the variance is zero at the optimum.

\subsection{Convergence Rates: $\log$-linear convergence and $\log$-$\log$ convergence}
Depending on the specific class of optimization problems and on some internal properties of the algorithm considered, we obtain different uniform rates of convergence (where the convergence can be almost sure, in probability or in expectation, depending on the setting); a fast rate will be a $\log$-linear convergence, as follows: 

\begin{eqnarray}
\mbox{{\bf{Fast rate:}} }\limsup_{n} \frac{\log{|| \tilde{x}_{n}-x^{*}||}}{{n}} = -A\ <\ 0,\label{eqnf}\label{loglin}
\end{eqnarray}
In the noise-free case, evolution strategies typically converge linearly in $\log$-linear scale, as shown in \cite{TCSAnne04-corr,AJT,bey01,Rechenberg,fournierppsn}.\\ 
The algorithm presents a slower rate of convergence in case of $\log$-$\log$ convergence, as follows:

\begin{eqnarray}
\mbox{{\bf{Slow rate:}} }\limsup_{n} \frac{\log{|| \tilde{x}_{n}-x^{*}||}}{{\log n}} = -A\ < \ 0,\label{eqn}\label{loglog}
\end{eqnarray}
The $\log$-$\log$ rates are typical rates in the noisy case (see \cite{abnoise,bignoise2,chen1988,clop,fabian,shamir,decock}). Nevertheless, we will here show that, under specific assumptions on the noise (if the noise around the optimum decreases ``quickly enough'', see section \ref{sectionmodel}), we can reach faster rates: $\log$-linear convergence rates as in Eq. \ref{loglin},  by averaging a constant number of resamplings of $f(x,\w)$.    

\subsection{Additive noise model}

Additive noise refers to:
\begin{equation}
f(x,\w)=||x-x^*||^p+ noise_{\w},\label{additive}
\end{equation}
where $p$ is a positive integer and where $noise_{\w}$ is sampled independently with a fixed given distribution.
In this model, the noise has lower bounded variance, even in the neighborhood of the optimum. The uniform rate typically converges linearly in $\log-\log$ scale (cf Eq. \ref{loglog}) as discussed in \cite{abnoise,chen1988,clop,fabian,shamir,decock}. This important case in applications has been studied in \cite{chen1988,fabian,fabian2,shamir} where tight bounds have been shown for stochastic gradient algorithms using finite differences. When using evolution strategies, \cite{bignoise2} has shown mathematically that an exponential number of resamplings (number of resamplings scaling exponentially with the index of iterations) or an adaptive number of resamplings (scaling as a polynomial of the inverse step-size) can both lead to a $\log$-$\log$ convergence rate.

\subsection{Multiplicative noise model}
Multiplicative noise, in the unimodal spherical case, refers to
\begin{equation}
f(x,\w)=||x-x^*||^p+||x-x^*||^p\times noise_{\w}\label{multip}
\end{equation}
and some compositions (by increasing mappings) of this function, where $p$ is a positive integer and where $noise_{\w}$ is sampled independently with a fixed given distribution.
\cite{augerupu} has studied the convergence of evolution strategies in noisy environments with multiplicative noise, and essentially shows that the result depends on the noise distribution: if $noise_{\w}$ is conveniently lower bounded, then some standard $(1+1)$ evolution strategy converges to the optimum; if arbitrarily negative values can be sampled with non-zero probability, then it does not converge.

\subsection{A more general noise model}\label{sectionmodel}
Eqs. \ref{additive} and \ref{multip} are particular cases of a more general noise model:
\begin{equation}
f(x,\w)=||x-x^*||^p + ||x-x^*||^{pz/2}\times noise_{\omega}\label{ourmodel}.
\end{equation}
where $p$ is a positive integer, $z\geq 0$ and $noise_{\w}$ is sampled independently with a fixed given distribution.
Eq. \ref{ourmodel} boils down to Eq. \ref{additive} when $z=0$ and to Eq. \ref{multip} when $z=2$.
We will here obtain fast rates for some larger values of $z$. More precisely, we will show that when $z>2$, we obtain $\log$-linear rates, as in Eq. \ref{loglin}. Incidentally, this shows some tightness (with respect to $z$) of conditions for non-convergence in \cite{augerupu}.

\section{Theoretical analysis}\label{th}

Section \ref{prel} is devoted to some preliminaries.
Section \ref{th1} presents results for constant numbers of resamplings on our generalized noise model (Eq. \ref{ourmodel}) when $z>2$.

\subsection{Preliminary: noise-free case}\label{prel}
Typically, an evolution strategy at iteration $n$:
\begin{itemize}
\item generates $\lambda$ individuals using the current estimate $x_{n-1}$ of the optimum $x^{*}$ and the so-called mutation strength (or step-size) $\sigma_{n-1}$,
\item provides a pair $(x_{n},\sigma_{n})$ where $x_{n}$ is a new estimate of $x^{*}$ and $\sigma_{n}$ is a new mutation strength.
\end{itemize}
From now on, for the sake of notation simplicity, we assume that $x^{*}=0$. 

For some evolution strategies and in the noise-free case, we know (see e.g. Theorem 4 in \cite{TCSAnne04-corr}) that there exists a constant $A$ such that :
\begin{eqnarray}
\frac{\log(||x_n||)}{n} \xrightarrow[n \rightarrow \infty]{a.s}-A\label{convergence}\\
\frac{\log(\sigma_n)}{n} \xrightarrow[n \rightarrow \infty]{a.s} -A\label{convergence2}
\end{eqnarray}
This paper will discuss cases in which an algorithm verifying Eqs. \ref{convergence}, \ref{convergence2} in the noise-free case also verifies them in a noisy setting.

{\bf{Remarks:}} {\em{ In the general case of arbitrary evolution strategies (ES), we don't know if $A$ is positive, but: 
\begin{itemize}
\item in the case of a $(1+1)$-ES with generalized one-fifth success rule, $A>0$ see \cite{oneplusonepaper};
\item in the case of a self-adaptive $(1, \lambda)$-ES with gaussian mutations, the estimate of $A$ by Monte-Carlo simulations is positive \cite{TCSAnne04-corr}. 
\end{itemize}
}}

\begin{property}\label{propprop}
For some $\delta>0$, for any $\alpha$, $\alpha'$ such that $\alpha<A$ and $\alpha'>A$, there exist $C>0$, $C'>0$, $V>0$, $V'>0$, such that with probability at least $1-\delta$
\begin{eqnarray}
\forall n\geq 1, C'\exp({-\alpha'}n)\leq ||x_n||\leq C\exp({-\alpha}n);\label{linearconv}\\
\forall n\geq 1, V'\exp(-\alpha' n)\leq \sigma_n\leq V\exp(-\alpha n).\label{linearconv2}
\end{eqnarray}
\end{property}

\begin{proof}
For any $\alpha<A$, almost surely, $\log(||x_{n}||)\leq -\alpha n$ for $n$ sufficiently large.
So, almost surely, $\sup_{n\geq 1}\log(||x_{n}||)+\alpha n$ is finite.
Consider $V$ the quantile $1-\frac{\delta}{4}$ of $\exp\left( \sup_{n\geq 1}\log(||x_{n}||)+\alpha n\right).$
Then, with probability at least $1-\frac{\delta}{4}$, $\forall n\geq 1, ||x_{n}||\leq V\exp(-\alpha n).$
We can apply the same trick for lower bounding $||x_{n}||$, and upper and lower bounding $\sigma_{n}$, all of them with probability $1-\frac{\delta}{4}$, so that all bounds hold true simultaneously with probability at least $1-\delta$.\CQFD
\end{proof}

\subsection{Noisy case}\label{th1}
The purpose of this Section is to show that if some evolution strategies perform well (linear convergence in the log-linear scale, as in Eqs. \ref{convergence}, \ref{convergence2}), then, just by considering $Y$ resamplings for each fitness evaluation as explained in Alg. \ref{generales}, they will also be fast in the noisy case.

Our theorem holds for any evolution strategy satisfying the following constraints:
\begin{itemize}
\item At each iteration $n$, a search point $x_n$ is defined and $\lambda$ search points are generated and have their fitness values evaluated.
\item The noisy fitness values are averaged over $Y$ (a constant) resamplings.
\item The $j^{th}$ individual evaluated at iteration $n$ is randomly drawn by $x_n+\sigma_n\mathcal{N}_d$ with $\mathcal{N}_d$ a $d$-dimensional standard Gaussian variable.
\end{itemize}
This framework is presented in Alg. \ref{generales}.
\begin{algorithm}
\begin{algorithmic}
\STATE{Initialize $x_0$ and $\sigma_0$.}
\STATE{$n\leftarrow 1$}
\WHILE{not finished}
	\FOR{$i\in\{1,\dots,\lambda\}$}
		\STATE{Define $x_{n,i}=x_n+\sigma_n \mathcal{N}_d$.}
		\STATE{Define $y_{n,i}=\frac1Y \sum_{k=1}^Y f(x_{n,i},\w_k)$.}
	\ENDFOR
\STATE{Update: $(x_{n+1},\sigma_{n+1})\leftarrow$update($x_{n,1},\dots,x_{n,\lambda}$,$y_{n,1},\dots,y_{n,\lambda},\sigma_{n}$).}
\STATE{$n\leftarrow n+1$}
\ENDWHILE
\end{algorithmic}
\caption{\label{generales} A general framework for evolution strategies. For simplicity, it does not cover all evolution strategies, e.g. mutations of step-sizes as in self-adaptive algorithms are not covered; yet, our proof can be extended to a more general case ($x_{n,i}$ distributed as $x_n+\sigma N$ for some noise $N$ with exponentially decreasing tail). The case $Y=1$ is the case without resampling. Our theorem basically shows that if such an algorithm converges linearly (in log-linear scale) in the noise-free case then the version with $Y$ large enough converges linearly in the noisy case when $z>2$.}
\end{algorithm}

We now state our theorem, under $\log$-linear convergence assumption (cf assumption (\ref{assii}) below).
\begin{theorem}\label{theotheo}
Consider the following assumptions: 
\begin{enumerate}[(i)]
\item the fitness function $f$ satisfies $\E\left[f(x,\w)\right]=\|x\|^p$ and has a limited variance: 
\begin{equation}
Var(f(x,\omega)) \leq \left(\E \left[f(x,\omega)\right]\right)^{z}\mbox{ for some }z>2;\label{limvar}
\end{equation}
\item\label{assii} in the noise-free case, the ES with population size $\lambda$ under consideration is log-linearly converging, i.e. for any $\delta > 0$, for some $\alpha>0$, $\alpha'>0$, there exist $C>0$, $C'>0$, $V>0$, $V'>0$, such that with probability 1-$\delta$, Eqs. \ref{linearconv} and \ref{linearconv2} hold; 
\item the number $Y$ of resamplings per individual is constant. 
\end{enumerate}
Then, if $z>\max\left(\frac{ 2(   p \alpha'-(\alpha-\alpha') d)}{ p \alpha},  \frac{2(   2 \alpha'-\alpha)}{\alpha}\right)$, for any $\delta>0$, there is $Y_0>0$ such that for any $Y\geq Y_0$,  Eqs. \ref{linearconv} and \ref{linearconv2} also hold with probability at least $(1-\delta)^2$  in the noisy case.
\end{theorem}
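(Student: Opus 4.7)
The plan is to \emph{couple} the noisy algorithm with an imagined noise-free run that uses exactly the same mutation vectors $\mathcal{N}_{d,i,n}$, and to show that, with total probability at least $(1-\delta)^2$, the two runs make identical comparison-based decisions at every generation. Since the update rule of a typical ES depends only on the ranking of the $y_{n,i}$, equality of rankings forces the two runs to produce the same sequence of iterates, so the noisy run inherits Eqs.~(\ref{linearconv})--(\ref{linearconv2}) directly from assumption~(\ref{assii}).

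First I would invoke assumption~(\ref{assii}) to fix an event $E_1$ of probability at least $1-\delta$ on which the noise-free trajectory $(x_n,\sigma_n)$ obeys the bounds of Property~\ref{propprop}. Conditioning on $E_1$, the offspring $x_{n,i}=x_n+\sigma_n\mathcal{N}_{d,i,n}$ are controlled by a Gaussian-norm tail bound: except on an event of probability $O(\delta/n^2)$ summable over $n$, $\|\mathcal{N}_{d,i,n}\|\lesssim \sqrt{d\,\log(n\lambda/\delta)}$ for all $i\leq \lambda$, which yields upper bounds of the form $\|x_{n,i}\|\leq K(n)\exp(-\alpha n)$ (with $K(n)$ polylog in $n$ and polynomial in $\sqrt{d}$) and matching lower bounds $\|x_{n,i}\|\gtrsim \exp(-\alpha' n)$. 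Next, I would lower-bound the minimum fitness gap $\Delta_n:=\min_{i\neq j}\bigl|\,\|x_{n,i}\|^p-\|x_{n,j}\|^p\,\bigr|$ using Gaussian anti-concentration applied to the real-valued variable $\|x_n+\sigma_n\mathcal{N}_d\|^p$: up to an event of probability $O(\delta/n^2)$, $\Delta_n$ is at least of order $\sigma_n(\|x_n\|+\sigma_n\sqrt{d})^{p-1}$ divided by a polynomial in $\lambda$, and this is precisely where the dimension $d$ creeps into the estimate.

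Finally, the noisy error $|y_{n,i}-\|x_{n,i}\|^p|$ is handled by Chebyshev: because $\mathrm{Var}(y_{n,i})\leq \|x_{n,i}\|^{pz}/Y$, the probability that it exceeds $\Delta_n/2$ is at most $4\|x_{n,i}\|^{pz}/(Y\Delta_n^{2})$. A union bound over the $\lambda^2$ pairs of offspring gives, at iteration $n$, a ranking-flip probability bounded (up to polylog factors) by
\[ \frac{C_1\lambda^2}{Y}\,\exp\!\Bigl(\bigl(2p\alpha'+2(\alpha'-\alpha)d - pz\alpha\bigr) n\Bigr) + \frac{C_2\lambda^2}{Y}\,\exp\!\Bigl(\bigl(2(2\alpha'-\alpha)-z\alpha\bigr) n\Bigr), \]
the two terms corresponding respectively to the regimes $\sigma_n\ll\|x_n\|$ and $\sigma_n\gtrsim\|x_n\|$ in the control of the offspring norms. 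Both exponents are strictly negative exactly under the two hypotheses on $z$, so the resulting series in $n$ is geometrically summable; choosing $Y\geq Y_0$ large enough then makes its sum at most $\delta$. On the intersection of $E_1$ with the event that no ranking flip ever occurs, which has probability at least $(1-\delta)^2$, the noisy run coincides with the noise-free one, and hence satisfies Eqs.~(\ref{linearconv})--(\ref{linearconv2}) as well.

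The step I expect to be the main obstacle is the explicit lower bound on $\Delta_n$: one has to argue that the $\lambda$ independent Gaussian offsets separate the offspring enough in the map $x\mapsto\|x\|^p$, which requires a careful anti-concentration estimate that behaves correctly both when $\sigma_n\ll\|x_n\|$ (the one-dimensional direction along $x_n$ dominates) and when $\sigma_n\gtrsim\|x_n\|$ (the full $d$-dimensional geometry of the Gaussian matters). It is this anti-concentration step, combined with the Gaussian upper tail bound controlling $\max_i\|x_{n,i}\|$, that is responsible for the dimension $d$ appearing in the first bound on $z$.
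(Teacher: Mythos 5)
Your proposal follows essentially the same route as the paper: an anti-concentration bound on the gap $\bigl|\,\|x_{n,i}\|^p-\|x_{n,j}\|^p\,\bigr|$ between offspring fitnesses (yielding the two regimes responsible for the two conditions on $z$), a Chebyshev bound on the averaged noise, a union bound over pairs and a geometrically summable series in $n$ controlled by taking $Y$ large, and finally the coupling argument that no misranking means the noisy run reproduces the noise-free trajectory on an event of probability $(1-\delta)^2$. The only cosmetic differences are that the paper works with a deterministic threshold $\delta_n=\exp(-\gamma n)$ instead of your random minimum gap $\Delta_n$, and that your explicit Gaussian tail control on $\max_i\|x_{n,i}\|$ is a reasonable (indeed slightly more careful) addition to the Chebyshev step.
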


\begin{corollary}\label{coco}
Under the same assumptions, with probability at least $(1-\delta)^2$,
\begin{equation}
\underset{n}{\lim\sup} \frac{log(||\tilde{x}_{n}||)}{n}\leq-\frac{\alpha}{\lambda Y}\nonumber
\end{equation}
\end{corollary}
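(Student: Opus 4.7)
The plan is to translate the per-iteration convergence of $x_m$ guaranteed by Theorem \ref{theotheo} into a per-evaluation convergence for $\tilde{x}_n$. Since Algorithm \ref{generales} issues exactly $\lambda Y$ calls to the oracle per iteration ($\lambda$ offspring, each resampled $Y$ times), the $n$-th evaluation occurs at the $i$-th offspring of iteration $m := \lceil n/(\lambda Y)\rceil$ for some $i\in\{1,\dots,\lambda\}$, so that $\tilde{x}_n = x_{m,i} = x_m + \sigma_m N_{m,i}$, where $N_{m,i}$ denotes the standard $d$-dimensional Gaussian drawn to produce that offspring. The triangle inequality gives $\|\tilde{x}_n\| \leq \|x_m\| + \sigma_m \|N_{m,i}\|$, which separates the two things that must be controlled.

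Next I would invoke Theorem \ref{theotheo} to obtain an event $E_1$ of probability at least $(1-\delta)^2$ on which $\|x_m\|\leq C\exp(-\alpha m)$ and $\sigma_m\leq V\exp(-\alpha m)$ hold simultaneously for every $m\geq 1$. To dispose of the Gaussian factor, I would use the standard tail bound $\P(\|N_{m,i}\|>m) \leq \exp(-c m^2)$ valid for $m$ large enough and some $c>0$; summability of this bound over $m$ and $i\in\{1,\dots,\lambda\}$ together with Borel--Cantelli produces an almost sure event $E_2$ on which $\|N_{m,i}\|\leq m$ for every $i$ and every $m$ above some (random) threshold $M_0$. Since $E_2$ has probability one, $E_1\cap E_2$ still has probability at least $(1-\delta)^2$, and on this event, for $n$ large enough that $m\geq M_0$, one has $\|\tilde{x}_n\|\leq (C+Vm)\exp(-\alpha m)$.

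Taking logarithms, dividing by $n$, and using $m=\lceil n/(\lambda Y)\rceil$, the additive term $\log(C+Vm)/n = O(\log(n)/n)$ vanishes, while $-\alpha m/n \to -\alpha/(\lambda Y)$. Hence $\limsup_n \log\|\tilde{x}_n\|/n \leq -\alpha/(\lambda Y)$ on an event of probability at least $(1-\delta)^2$, which is the claim. The only subtle step is the coupling between the Gaussian-deviation control and the conclusion of Theorem \ref{theotheo}: these events live in the same underlying probability space (they share the Gaussian variables driving both the offspring generation and the iterate trajectory), but since the deviation event is almost sure, intersecting it with $E_1$ introduces no probability loss. Everything else is routine bookkeeping on the index translation $n\mapsto m$.
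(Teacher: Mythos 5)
Your proof is correct, and its core is the same index translation the paper uses: each iteration of Algorithm \ref{generales} costs $\lambda Y$ evaluations, so the per-evaluation rate is the per-iteration rate divided by $\lambda Y$. The paper's own proof is a one-liner that simply asserts $\limsup_n \frac{\log\|\tilde{x}_n\|}{n}=\limsup_n \frac{\log\|x_n\|}{\lambda Y n}$, which silently identifies the evaluated point $\tilde{x}_n$ with the iterate $x_m$; but $\tilde{x}_n$ is in fact an offspring $x_m+\sigma_m\mathcal{N}_d$, so that identity needs justification. Your triangle-inequality decomposition $\|\tilde{x}_n\|\le\|x_m\|+\sigma_m\|N_{m,i}\|$, combined with the bound $\sigma_m\le V\exp(-\alpha m)$ from Eq. \ref{linearconv2} and the Borel--Cantelli control $\|N_{m,i}\|\le m$ eventually almost surely, supplies exactly the missing step, and your observation that intersecting with an almost sure event costs no probability is the right way to preserve the $(1-\delta)^2$ bound. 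In short: same route as the paper, but your version closes a gap the paper leaves open; the only price is the extra Gaussian-tail bookkeeping, which is routine.
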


{\bf{Proof of Corollary \ref{coco} : }} Immediate consequence of Theorem \ref{theotheo}, by applying Eq. \ref{linearconv} and using $\underset{n}{\lim\sup} \frac{log(||\tilde{x}_{n}||)}{n}=\underset{n}{\lim\sup} \frac{log(||{x}_{n}||)}{\lambda Y n}$.\CQFD

{\bf{Remarks:}} {\em{
	 \begin{itemize}
\item {\bf{Interpretation:}} Informally speaking, our theorem shows that if an algorithm converges in the noise-free case, then it also converges in the noisy case with the resampling rule, at least if $z$ and $Y$ are large enough.
\item Notice that we can choose constants $\alpha$ and $\alpha^{'}$ very close to each other. Then the assumption $z>\max\left(\frac{ 2(   p \alpha'-(\alpha-\alpha') d)}{ p \alpha},  \frac{2(   2 \alpha'-\alpha)}{\alpha}\right)$ boils down to $z>2$.
\item  We show a log-linear convergence rate as in the noise-free case. This means that we get $\log ||\tilde{x}_{n}||$ linear in the number of function evaluations. This is as Eq. \ref{loglin}, and faster than Eq. \ref{loglog} which is typical for noisy optimization with constant variance.
\item In the previous hypothesis, the new individuals are drawn following $x_n+\sigma_n\mathcal{N}_d$ with $\mathcal{N}_d$ a $d$-dimensional standard Gaussian variable, but we could substitute $\mathcal{N}_d$ for any random variable with an exponentially decreasing tail.
\end{itemize} }}

{\bf{Proof of Theorem \ref{theotheo} : }} In all the proof, ${\mathcal{N}_{k}}$ denotes a standard normal random variable in dimension $k$.

{\bf{Sketch of proof:}} Consider an arbitrary $\delta>0$ and $\delta_n=\exp(-\gamma n)$ for some  $n\geq 1$ and $\gamma>0$.\\
We compute in Lemma \ref{boundprobatwopoints} the probability that at least two generated points $x_{n,i_1}$ and $x_{n,i_2}$ at iteration $n$ are ``close'', i.e are such that $|\ ||x_{n,i_1}||^p-||x_{n,i_2}||^p\ |\leq \delta_n$; then we calculate the probability that the noise of at least one of the $\lambda$ evaluated individuals of iteration $n$ is bigger than $\frac{\delta_n}{2}$ in Lemma \ref{probabilityonnoise}. Thus, we can conclude in Lemma \ref{probamisranking} by estimating the probability that at least two individuals are misranked due to noise.
\newline
We first begin by showing a technical lemma.

\begin{lemma}\label{technicallemma}
Let $u\in \mathbb{R}^{d}$ be a unit vector and $\mathcal{N}_{d}$ a $d$-dimensional standard normal random variable. Then for $S>0$ and $\ell >0$, there exists a constant $M>0$ such that :
 \begin{equation*}
\max_{v\geq0} \P(|\ ||u+S{\mathcal{N}_{d}}||^{p}-v |\leq \ell)\leq MS^{-d}\max\left(\ell,\ell^{d/p}\right).
\end{equation*}
\end{lemma}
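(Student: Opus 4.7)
The plan is to view the event $|\|u+S\mathcal{N}_d\|^p-v|\leq\ell$ as asking that $X:=u+S\mathcal{N}_d$ lie in a spherical annulus $\{r_1\leq\|x\|\leq r_2\}$ with $r_1=(v-\ell)_+^{1/p}$ and $r_2=(v+\ell)^{1/p}$, and then bound the probability by a density-volume product. The Gaussian density of $X$ is pointwise bounded by $(2\pi S^2)^{-d/2}$, while the Lebesgue volume of the annulus is $c_d(r_2^d-r_1^d)$ where $c_d$ is the volume of the unit ball, so
\[
\P\bigl(|\|X\|^p-v|\leq\ell\bigr)\ \leq\ c_d\,(2\pi S^2)^{-d/2}\,(r_2^d-r_1^d).
\]
This already supplies the $S^{-d}$ prefactor; what remains is to show $r_2^d-r_1^d\leq C_{d,p}\max(\ell,\ell^{d/p})$ uniformly in $v$.

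The key elementary inequality $(v+\ell)^{d/p}-(v-\ell)_+^{d/p}\leq C\max(\ell,\ell^{d/p})$ I would prove by case analysis on the size of $v$ and on the sign of $d/p-1$. If $v\leq\ell$, the left-hand side collapses to $(v+\ell)^{d/p}\leq(2\ell)^{d/p}$; if $v>\ell$ and $d\leq p$ (so $d/p\leq 1$), subadditivity of the concave power $t\mapsto t^{d/p}$ yields $(v+\ell)^{d/p}-(v-\ell)^{d/p}\leq(2\ell)^{d/p}$; if $v>\ell$ and $d>p$, the mean value theorem gives $(v+\ell)^{d/p}-(v-\ell)^{d/p}\leq(2\ell)(d/p)(v+\ell)^{d/p-1}$, which is $O(\ell)$ as long as $v+\ell$ stays bounded by a constant depending only on $d$ and $p$.

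The main obstacle is the regime $d>p$ with $v$ unbounded, where the mean-value estimate above blows up in $v$. To close that gap I would refine the density bound on the annulus: since $u$ is a unit vector, $\|x-u\|\geq \|x\|-1$, so on $\{\|x\|\geq r_1\}$ the Gaussian density is actually bounded by $(2\pi S^2)^{-d/2}e^{-(r_1-1)^2/(2S^2)}$. Equivalently, the annulus mass is dominated by the chi-distribution tail $\P(\|\mathcal{N}_d\|\geq (r_1-1)/S)$; for a threshold on $r_1$ chosen in terms of $d$ and $p$ alone, the exponential decay more than compensates for the polynomial growth $(v+\ell)^{d/p-1}$, recovering a bound of the form $CS^{-d}\max(\ell,\ell^{d/p})$ in this tail regime as well. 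Stitching together the bounded-$v$ case (handled by the volume estimate) and the large-$v$ case (handled by the Gaussian tail) yields the conclusion with a single constant $M>0$ depending only on $d$ and $p$.
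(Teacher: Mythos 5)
Your proposal is correct and follows essentially the same route as the paper's proof: both interpret the event as the Gaussian point landing in the spherical annulus $\{(v-\ell)_+^{1/p}\leq\|x\|\leq(v+\ell)^{1/p}\}$, bound the probability by a sup-density times Lebesgue-volume product yielding the $S^{-d}$ factor, control $(v+\ell)^{d/p}-(v-\ell)^{d/p}$ by a first-order (Taylor/mean-value) expansion split on $v\lessgtr\ell$, and absorb the residual polynomial growth in $v$ (when $d>p$) into the decaying Gaussian factor on the annulus. If anything, your explicit case analysis on $d/p\lessgtr1$ and your explicit tail argument for large $v$ make transparent what the paper hides inside the finiteness of its constant $M_1$.
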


\begin{proof}
For any $v\geq \ell$, we denote $E_{v\geq\ell}$ the set :
\begin{equation*}
 E_{v\geq\ell}=\left\lbrace x\ ; |\ ||x||^{p}-v\ |\leq \ell\right\rbrace =\left\lbrace x\ ; \left(v-\ell\right)^{\frac{1}{p}}\leq||x||\leq\left(v+\ell\right)^{\frac{1}{p}}\right\rbrace.
\end{equation*}
We first compute $\mu(E_{v\geq\ell})$, the Lebesgue measure of $E_{v\geq\ell}$ :
\begin{equation*} 
\mu(E_{v\geq\ell})=K_d\left\lbrace\left(v+\ell\right)^{\frac{d}{p}}-\left(v-\ell\right)^{\frac{d}{p}}\right\rbrace,\\
\end{equation*}
with $K_d=\frac{(2\pi)^{d/2}}{2\times 4\times \dots \times d}$ if $d$ is even, and $K_d=\frac{2(2\pi)^{(d-1)/2}}{1\times 3\times \dots \times d}$ otherwise. Hence, by Taylor expansion,
$\mu(E_{v\geq\ell}) \leq Kv^{\frac{d}{p} -1}\ell$, where $K=K_{d}\left(2\frac{d}{p}+\underset{v\geq \ell}{\sup}\ \underset{0<\zeta<\frac{\ell}{v}}{\sup}\frac{q''(\zeta)}{2}\frac{\ell}{v}\right)$, with $q(x)=(1+x)^{\frac{d}{p}}$.\\
$\bullet$ If $v\geq \ell$: 
\begin{eqnarray*}
\P(|\ ||u+S{\mathcal{N}_{d}}||^{p}-v |\leq \ell)&=&\P(u+S\mathcal{N}_{d}\in E_{v\geq\ell}),\\
  									&\leq&S^{-d}\underset{x\in E_{v\geq\ell}}{\sup} \left(\frac{1}{\sqrt{2\pi}}\exp(-\frac{||S^{-1}(x-u)||^2}{2})\right)\mu(E_{v\geq\ell}),\\
									&\leq&M_{1}S^{-d} \ell,\\
									&\leq&M_{1}S^{-d}\max\left(\ell,\ell^{d/p}\right).
\end{eqnarray*}
where $M_{1}=\frac{K}{\sqrt{2\pi}}\underset{v\geq\ell}{\sup}\underset{x : ||x||\leq (v+\ell)^{\frac{1}{p}}}{\sup}\left[ v^{\frac{d}{p} -1}\exp\left(-\frac{||S^{-1}(x-u)||^2}{2}\right)\right].$
\begin{eqnarray*}
\mbox{$\bullet$ If $v< \ell$, }& &\P(|\ ||u+S\mathcal{N}_{d}||^{p}-v |\leq \ell)\leq M_{2}S^{-d} \ell^{d/p} \leq M_{2}S^{-d} \max\left(\ell,\ell^{d/p}\right),
\end{eqnarray*}
where $M_{2}=2^{\frac{d}{p}}\frac{K_{d}}{\sqrt{2\pi}}$. Hence the result follows by taking $M=\max(M_{1},M_{2})$.
\CQFD
\end{proof}

\begin{lemma}\label{boundprobatwopoints}
Let us denote by $P^{(1)}_{n}$ the probability that, at iteration $n$, there exist at least two points $x_{n,i_1}$ and $x_{n,i_2}$ such that $|\ ||x_{n,i_1}||^p-||x_{n,i_2}||^p\ |\leq \delta_n$. Then
\begin{equation*}
P^{(1)}_{n}\leq B\lambda^{2}\exp(-\gamma'n),
\end{equation*}
for some $B>0$ and $\gamma'>0$ depending on $\gamma$, $d$, $p$, $C$, $C'$, $V$, $\alpha$, $\alpha'$.
\end{lemma}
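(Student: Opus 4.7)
The plan is to reduce the event to a single pair of indices via a union bound, then to rewrite the coincidence event in a scale--invariant form so that Lemma \ref{technicallemma} applies directly. Concretely, I would bound $P^{(1)}_n$ by $\binom{\lambda}{2}\leq \lambda^2/2$ times the probability, for a fixed pair $(i_1,i_2)$, that $|\,\|x_{n,i_1}\|^p-\|x_{n,i_2}\|^p\,|\leq \delta_n$. Conditioning on $(x_n,\sigma_n)$ makes the two offspring independent; further conditioning on $x_{n,i_2}$ replaces $v := \|x_{n,i_2}\|^p$ by a deterministic number, and it is enough to bound, uniformly in $v\geq 0$, the probability that $|\,\|x_{n,i_1}\|^p - v\,|\leq \delta_n$ where $x_{n,i_1}=x_n+\sigma_n\mathcal{N}_d$.

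Setting $u = x_n/\|x_n\|$ (a unit vector) and $S = \sigma_n/\|x_n\|$, the identity $\|x_{n,i_1}\|^p = \|x_n\|^p\cdot \|u + S\mathcal{N}_d\|^p$ rewrites the coincidence event as $\bigl|\,\|u + S\mathcal{N}_d\|^p - v/\|x_n\|^p\,\bigr|\leq \delta_n/\|x_n\|^p$. Lemma \ref{technicallemma}, applied with this $S$ and $\ell := \delta_n/\|x_n\|^p$, gives the uniform bound $M S^{-d}\max(\ell,\ell^{d/p})$.

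It remains to show that this bound decays exponentially. Working on the event of Property \ref{propprop} (which holds with probability at least $1-\delta$ and fixes the constants $C,C',V,V'$ appearing in the statement), we have $\sigma_n \geq V'\exp(-\alpha' n)$ and $\|x_n\|\leq C\exp(-\alpha n)$, hence $S^{-d}\leq (C/V')^d\exp(d(\alpha'-\alpha)n)$; similarly $\|x_n\|^p\geq (C')^p\exp(-p\alpha' n)$ yields $\ell \leq (C')^{-p}\exp(-(\gamma-p\alpha')n)$. Thus $S^{-d}\ell$ decays at rate $\gamma - p\alpha' - d(\alpha'-\alpha)$ and $S^{-d}\ell^{d/p}$ at rate $(d/p)\gamma - 2d\alpha' + d\alpha$. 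Choosing $\gamma$ larger than both $p\alpha' + d(\alpha'-\alpha)$ and $p(2\alpha'-\alpha)$, these two rates are both strictly positive, and taking $\gamma'$ to be their minimum proves the claim after multiplication by the $\lambda^2$ factor from the union bound.

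The main obstacle is the bifurcation in Lemma \ref{technicallemma} between $\ell$ and $\ell^{d/p}$: one must argue that a single choice of $\gamma$ yields exponential decay in both regimes, and the two resulting conditions on $\gamma$ are precisely the ones that, when later matched against the noise bound $\mathrm{Var}(f(x,\omega))\leq \|x\|^{pz}$, will reproduce the two competing lower bounds for $z$ appearing in Theorem \ref{theotheo}. A minor additional care is needed in tracking uniformity in $v$ (which is absorbed into the constant $M$ of Lemma \ref{technicallemma}) and in verifying that the $M_1$ supremum in that lemma remains finite once $S$ and $\ell$ depend on $n$ through the bounds just described.
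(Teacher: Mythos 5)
Your proposal is correct and follows essentially the same route as the paper: a union bound over pairs, conditioning to reduce to a fixed $v$, rescaling by $\|x_n\|$ so that Lemma \ref{technicallemma} applies with $S=\sigma_n/\|x_n\|$ and $\ell=\delta_n/\|x_n\|^p$, and then using the bounds of Property \ref{propprop} to show exponential decay for $\gamma$ large enough. Your explicit treatment of both branches $\ell$ and $\ell^{d/p}$ (taking the minimum of the two decay rates) is a slightly cleaner rendering of the paper's exponent $m$, but it is the same argument.
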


\begin{proof}
Let us first compute the probability $P^{(0)}_n$ that, at iteration $n$, two given generated points $x_{n,i_{1}}$ and $x_{n,i_{2}}$ are such that $|\ ||x_{n,i_1}||^p-||x_{n,i_2}||^p\ |\leq \delta_n$. Let us denote by ${\mathcal{N}}_{d}^{1}$ and ${\mathcal{N}}_{d}^{2}$ two $d$-dimensional standard independent random variables, $u\in \mathbb{R}^{d}$ a unit vector and $S_{n}=\frac{\sigma_{n}}{||x_{n}||}$.
\begin{eqnarray*}
P^{(0)}_{n}&=&\P\left(|\ ||x_{n}+\sigma_{n}{\mathcal{N}}_{d}^{1}||^p-||x_{n}+\sigma_{n}{\mathcal{N}}_{d}^{2}||^p\ |\leq \delta_n\right),\\
		&=&\P\left(|\ ||u+S_{n}{\mathcal{N}}_{d}^{1}||^p-||u+S_{n}{\mathcal{N}}_{d}^{2}||^p\ |\leq \frac{\delta_n}{||x_n||^p}\right),\\
     		&\leq&\max_{v\geq0} \P\left(|\ ||u+S_{n}{\mathcal{N}}_{d}^{1}||^{p}-v |\leq \frac{\delta_n}{||x_n||^p}\right).
\end{eqnarray*}
Hence, by Lemma \ref{technicallemma}, there exists a $M>0$ such that $P^{(0)}_{n}\leq MS_{n}^{-d}\left(\frac{\delta_{n}}{||x_n||^{p}}\right)^{m}$, where $m$ is such that $\left(\frac{\delta_{n}}{||x_n||^{p}}\right)^{m}=\max\left(\frac{\delta_{n}}{||x_{n}||^{p}},\left(\frac{\delta_{n}}{||x_{n}||^{p}}\right)^{d/p}\right)$. Moreover $S_{n}\geq V'C^{-1}\exp(-(\alpha' - \alpha)n)$ by Assumption (\ref{assii}). Thus $P^{(0)}_{n}\leq B\exp(-\gamma' n)$, with $B=MV'^{-d}C^{d}C'^{-mp}$ and $\gamma'=d(\alpha-\alpha^{'})+m\gamma-mp\alpha'$. In particular, $\gamma'$ is positive, provided that $\gamma$ is sufficiently large.\\
By union bound, $P^{(1)}_{n}\leq \frac{(\lambda-1)\lambda}{2} P^{(0)}_n\leq B\lambda^{2}\exp(-\gamma'n)$.
\CQFD
\end{proof}

We now provide a bound on the probability $P^{(3)}_{n}$ that the fitness value of at least one search point generated at iteration $n$ has noise (i.e. deviation from expected value) bigger than $\frac{\delta_{n}}{2}$ in spite of the $Y$ resamplings.
\begin{lemma}\label{probabilityonnoise}
$$P^{(3)}_{n}:=\P\left(\exists i\in \{ 1,\dots,\lambda\}\ ;\ \left|\frac{1}{Y}\sum_{j=1}^{Y} f(x_{n,i},\omega_{j})- \E\left[f(x_{n,i},\omega_{j})\right] \right|\geq \frac{\delta_n}{2}\right)$$
$$\leq \lambda B'\exp(-\gamma''n)$$ for some $B'>0$ and $\gamma''>0$ depending on $\gamma$, $d$, $p$, $z$, $C$, $Y$, $\alpha$, $\alpha'$.\\ 
\end{lemma}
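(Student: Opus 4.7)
My plan is to reduce to a Chebyshev tail bound on the resampling noise, applied separately to each of the $\lambda$ individuals and combined by a union bound. First I would work on the high-probability event of Property \ref{propprop}, on which $\|x_n\|\leq C\exp(-\alpha n)$ and $\sigma_n\leq V\exp(-\alpha n)$ simultaneously. Fix an index $i$ and write $x_{n,i}=x_n+\sigma_n\mathcal{N}_d^{(i)}$, where $\mathcal{N}_d^{(i)}$ is standard Gaussian, independent of the history $(x_n,\sigma_n)$ and of the evaluation noise $\{\omega_j\}$. This yields the deterministic sandwich $\|x_{n,i}\|\leq \exp(-\alpha n)\bigl(C+V\|\mathcal{N}_d^{(i)}\|\bigr)$, which will do all the work of controlling the conditional variance.

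Next, conditionally on $x_{n,i}$, the $Y$ evaluations $f(x_{n,i},\omega_j)$ are i.i.d.\ with mean $\|x_{n,i}\|^p$ and variance at most $\|x_{n,i}\|^{pz}$ by assumption (\ref{limvar}). Chebyshev's inequality applied to their empirical mean gives
\begin{equation*}
\P\left(\left|\frac{1}{Y}\sum_{j=1}^Y f(x_{n,i},\omega_j)-\E[f(x_{n,i},\omega)]\right|\geq\frac{\delta_n}{2}\,\Big|\,x_{n,i}\right)\leq \frac{4\|x_{n,i}\|^{pz}}{Y\delta_n^2}.
\end{equation*}
Taking expectation with respect to $\mathcal{N}_d^{(i)}$ and using the sandwich above,
\begin{equation*}
\E\!\left[\|x_{n,i}\|^{pz}\right]\leq \exp(-pz\alpha n)\,\E\!\left[(C+V\|\mathcal{N}_d\|)^{pz}\right],
\end{equation*}
and the second factor is a finite constant since Gaussians have moments of every order. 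Substituting $\delta_n=\exp(-\gamma n)$ yields, for each individual $i$, a bound of the form $(c/Y)\exp\!\bigl(-(pz\alpha-2\gamma)n\bigr)$. A union bound over $i\in\{1,\dots,\lambda\}$ finally produces
\begin{equation*}
P^{(3)}_n\leq \lambda B'\exp(-\gamma''n),\qquad \gamma''=pz\alpha-2\gamma,
\end{equation*}
which is the claimed inequality, with $\gamma''>0$ as soon as $\gamma<pz\alpha/2$.

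The main obstacle is not the calculation itself but the compatibility of this upper bound on $\gamma$ with the lower bound on $\gamma$ already imposed by Lemma \ref{boundprobatwopoints} (where $\gamma$ had to be large enough to make $\gamma'$ positive). The admissible window for $\gamma$ is non-empty precisely when the two exponent conditions on $z$ stated in Theorem \ref{theotheo} hold, which is how the $z>2$ threshold re-emerges at the end. A secondary bookkeeping point is that the Property bounds hold only on a $1-\delta$ event, so this lemma should be read as holding conditionally on that event (or, equivalently, at the price of an additive $\delta$), which is consistent with the $(1-\delta)^2$ probability appearing in the conclusion of Theorem \ref{theotheo}.
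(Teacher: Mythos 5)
Your proof follows essentially the same route as the paper's: Chebyshev's inequality on the empirical mean of the $Y$ resamplings using the variance bound (\ref{limvar}), followed by a union bound over the $\lambda$ individuals, yielding the same exponent $\gamma''=pz\alpha-2\gamma$. You are in fact slightly more careful than the paper, which takes $B'=4Y^{-1}C^{pz}$ as if $\|x_{n,i}\|\leq C\exp(-\alpha n)$ held for the offspring directly, whereas you correctly account for the Gaussian offset $\sigma_n\mathcal{N}_d$ via the sandwich $\|x_{n,i}\|\leq\exp(-\alpha n)(C+V\|\mathcal{N}_d\|)$ and the finiteness of $\E\left[(C+V\|\mathcal{N}_d\|)^{pz}\right]$.
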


\begin{proof}
First, for one point $x_{n,i_{0}}$, $i_{0}\in \{1,\dots,\lambda\}$ generated at iteration $n$, we write $P^{(2)}_{n}$ the probability that when evaluating the fitness function at this point, we make a mistake bigger than $\frac{\delta_{n}}{2}$.\\ 
$P^{(2)}_{n}=\P(|\frac{1}{Y}\sum_{j=1}^{Y} f(x_{n,i_{0}},\omega_{j})- \E\left[f(x_{n,i_{0}},\omega_{j})\right]|\geq \frac{\delta_n}{2})\leq B'\exp(-\gamma''n)$ by using Chebyshev's inequality, where $B'=4Y^{-1}C^{pz}$ and $\gamma''=\alpha zp-2\gamma$. In particular, $\gamma'' >0$ if $z>\frac{2 (mp \alpha'-(\alpha-\alpha') d) }{p \alpha m}$; hence, if $z\geq \max\left(\frac{ 2(   p \alpha'-(\alpha-\alpha') d)}{ p \alpha},  \frac{2(   2 \alpha'-\alpha)}{\alpha}\right)$, we get $\gamma''>0$.\\ 
Then, $P^{(3)}_{n}\leq \lambda P^{(2)}_n$ by union bound.\CQFD
\end{proof}

\begin{lemma}\label{probamisranking}
Let us denote by $P_{misranking}$ the probability that in at least one iteration, there is at least one misranking of two individuals.
Then, if $z> \max\left(\frac{ 2(   p \alpha'-(\alpha-\alpha') d)}{ p \alpha},  \frac{2(   2 \alpha'-\alpha)}{\alpha}\right)$ and $Y$ is large enough, 
$P_{misranking}\leq \delta$. 
\end{lemma}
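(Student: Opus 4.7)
\textbf{Proof plan for Lemma \ref{probamisranking}.}

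The plan is to reduce a misranking event at iteration $n$ to the union of two bad events already controlled by Lemmas~\ref{boundprobatwopoints} and~\ref{probabilityonnoise}, and then conclude by a union bound over $n\geq 1$ together with convergence of two geometric series.

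First I would observe the following deterministic implication: if at iteration $n$ (a) every pair of generated individuals $x_{n,i_1}, x_{n,i_2}$ satisfies $\bigl|\,\|x_{n,i_1}\|^p-\|x_{n,i_2}\|^p\,\bigr|>\delta_n$, and (b) every individual $x_{n,i}$ has its averaged fitness deviating from the expectation $\|x_{n,i}\|^p$ by less than $\delta_n/2$, then no two individuals can be misranked at step $n$. Indeed, under (a) the true fitness gap exceeds $\delta_n$, while under (b) the noise perturbs each fitness by less than $\delta_n/2$, so the total perturbation on a pair is less than $\delta_n$ and cannot flip their order. Therefore the probability of at least one misranking at iteration $n$ is bounded by $P^{(1)}_n + P^{(3)}_n$.

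By a union bound over iterations,
\begin{equation*}
P_{misranking} \;\leq\; \sum_{n=1}^{\infty} \bigl(P^{(1)}_n + P^{(3)}_n\bigr) \;\leq\; \sum_{n=1}^{\infty} \bigl(B\lambda^2 \exp(-\gamma' n) + \lambda B' \exp(-\gamma'' n)\bigr).
\end{equation*}
Under the assumption $z>\max\bigl(\frac{2(p\alpha'-(\alpha-\alpha')d)}{p\alpha},\frac{2(2\alpha'-\alpha)}{\alpha}\bigr)$, we may pick $\gamma>0$ large enough that $\gamma'>0$ (from Lemma~\ref{boundprobatwopoints}) and at the same time small enough that $\gamma''>0$ (from Lemma~\ref{probabilityonnoise}); the condition on $z$ is exactly what guarantees that such a $\gamma$ exists. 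Both series are then convergent geometric series, hence bounded by constants $K_1\lambda^2$ and $K_2\lambda$ respectively, where the second constant is proportional to $B' = 4Y^{-1}C^{pz}$.

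Finally, since $B'$ scales as $1/Y$, the second sum can be made arbitrarily small by choosing $Y$ large. For the first sum, I would note that $\gamma'$ itself can be increased further by slightly increasing $\gamma$ (which is compatible with $\gamma''>0$ as long as the strict inequality on $z$ holds), so the first sum can also be driven below $\delta/2$. Choosing $Y \geq Y_0$ large enough so that the total is $\leq \delta$ completes the proof. The only subtle point — and the one place where the constraints bite — is simultaneously satisfying $\gamma'>0$ and $\gamma''>0$; everything else is a routine geometric-sum estimate and union bound.
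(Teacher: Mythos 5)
Your proof follows essentially the same route as the paper's: the same decomposition of a misranking at iteration $n$ into the events ``two true fitness values within $\delta_n$'' or ``noise exceeding $\delta_n/2$'', giving $P^{(4)}_{n}\leq P^{(1)}_{n}+P^{(3)}_{n}$, followed by a union bound over $n$ and convergence of geometric series under the simultaneous positivity of $\gamma'$ and $\gamma''$, which is exactly what the condition on $z$ buys. You are in fact slightly more explicit than the paper about why the total can be pushed below $\delta$ (the $1/Y$ factor in $B'$, and the choice of $\gamma$); the only caveat is that increasing $\gamma$ to shrink the $P^{(1)}_{n}$ sum is limited by the constraint $\gamma<\alpha z p/2$ needed for $\gamma''>0$, so that sum cannot be made arbitrarily small this way --- but the paper's own proof is no more precise on this point, disposing of it with ``constants large enough''.
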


This lemma implies that with probability at least $1-\delta$, provided that $Y$ has been chosen large enough, we get the same rankings of points as in the noise free case. In the noise free case Eqs. \ref{linearconv} and \ref{linearconv2} hold with probility at least $1-\delta$ - this proves the convergence with probability at least  $(1-\delta)^2$, hence the expected result; the proof of the theorem is complete.\CQFD

\begin{proof} (of the lemma)

We consider the probability $P_{n}^{(4)}$ that two individuals $x_{n,i_1}$ and $x_{n,i_2}$ at iteration $n$ are misranked due to noise, so 
\begin{eqnarray}
||x_{n,i_1}||^p&\leq& ||x_{n,i_2}||^p\label{eq1}\\
\mbox{ and }\frac{1}{Y}\sum_{j=1}^{Y} f(x_{n,i_{1}},\w_{j})&\geq& \frac{1}{Y}\sum_{j=1}^{Y} f(x_{n,i_{2}},\omega_{j})\label{eq2}
\end{eqnarray}
 
Eqs. \ref{eq1} and \ref{eq2} occur simultaneously if either two points have very similar fitness (difference less than $\delta_n$) or the noise is big (larger than $\frac{\delta_n}{2}$). Therefore,
$P^{(4)}_{n}\leq P^{(1)}_{n}+P^{(3)}_{n}\leq \lambda^{2} P^{(0)}_{n}+\lambda P^{(2)}_{n}\leq (B+B')\lambda^{2}\exp(-\min(\gamma',\gamma'')n)$.\\
$P_{misranking}$ is upper bounded by $\sum_{n\geq 1} P^{(4)}_{n}<\delta$ if $\gamma'$ and $\gamma''$ are positive and constants large enough. $\gamma'$ and $\gamma''$ can be chosen positive simultaneously if $z> \max\left(\frac{ 2(   p \alpha'-(\alpha-\alpha') d)}{ p \alpha},  \frac{2(   2 \alpha'-\alpha)}{\alpha}\right)$.
\CQFD
\end{proof}

\section{Experiments : how to choose the right number of resampling ?}

We consider in our experiments a version of multi-membered evolution strategies, the ($\mu$,$\lambda$)-ES, where $\mu$ denotes the number of parents and $\lambda$ the number of offspring ($\mu\leq\lambda$; see Alg. \ref{es}). We denote $(x_{n}^{1},\dots,x_{n}^{\mu})$ the $\mu$ parents at iteration $n$ and $(\sigma_{n}^{1},\dots,\sigma_{n}^{\mu})$ their corresponding step-size. At each iteration, a ($\mu$,$\lambda$)-ES noisy algorithm : (i) generates $\lambda$ offspring by mutation on the $\mu$ parents, using the corresponding mutated step-size, (ii) selects the $\mu$ best offspring by ranking the noisy fitness values of the individuals. Thus, the current approximation of the optimum $x^{*}$ at iteration $n$ is $x_{n}^{1}$, to be consistent with the previous notations, we denote $x_{n}=x_{n}^{1}$ and $\sigma_{n}=\sigma_{n}^{1}$.

\begin{algorithm}
	\begin{algorithmic}
		\STATE{{\bf{Parameters : }} $Y> 0$, $\lambda\geq \mu>0$, a dimension $d>0$.}
		\STATE{{\bf{Input : }} $\mu$ initial points $x_{1}^{1},\dots,x_{1}^{\mu} \in \R^{d}$ and initial step size $\sigma_{1}^{1}>0, \dots,\sigma_{1}^{\mu}  >0$.}
		\STATE{$n\leftarrow 1$}
		\WHILE{(true)}
		\STATE{Generate $\lambda$ individuals indenpendently using :}
		\begin{eqnarray*}
		\sigma_{j}&=&\sigma_{n}^{mod(j-1,\mu)+1}\times exp(\frac{1}{2d}\times \mathcal{N}_{1})\\
		i_{j}&=&x_{n}^{mod(j-1,\mu)+1}+\sigma_{j}\mathcal{N}_{d}
		\end{eqnarray*}
		\STATE{$\forall j\in \{1,\dots,\lambda\}$, evaluate $i_{j}$  $Y$ times. Let $y_{j}$ be the averaging over these $Y$ evaluations. }
		\STATE{Define $j_{1},\dots,j_{\lambda}$ so that $y_{j_{1}}\leq y_{j_{2}}\leq \dots \leq y_{j_{\lambda}}$. }
		\STATE{{\bf{Update : }} compute $\sigma_{n+1}^{k} $ and $x_{n+1}^{k}$ for $k\in \{1,\dots,\mu\}$:
				\begin{eqnarray*}
				\sigma_{n+1}^{k}&=&\sigma_{j_{k}}\\
				 x_{n+1}^{k}&=&x_{j_{k}}
				\end{eqnarray*}}
		\STATE{$n\leftarrow n+1$}
		\ENDWHILE
	\end{algorithmic}
	\caption{\label{es}An evolution strategy, with constant number of resamplings. If we consider $Y=1$, we obtain the case without resampling. ${\mathcal{N}_{k}}$ is a $k$-dimensional standard normal random variable.} 
\end{algorithm}
Experiments are performed on the fitness function $f(x,\omega)=||x||^p + ||x||^{pz/2}\mathcal{N}$, with $x\in \R^{15}$, $p=2$, $z=2.1$, $\lambda=4$, $\mu=2$, and $\mathcal{N}$ a standard gaussian random variable, using a budget of $500000$ evaluations. The results presented here are the mean and the median over 50 runs.
The positive results are proved, above, for a given quantile of the results. This explains the good performance in Fig. \ref{med} (median result) as soon as the number of resamplings is enough. The median performance is optimal with just 12 resamplings.
On the other hand, Fig. \ref{moy} shows the mean performance of Alg. \ref{es} with various numbers of resamplings. We see that a limited number of runs diverge so that the mean results are bad even with 16 resamplings; results are optimal (on average) for 20 resamplings. 

Results are safer with 20 resamplings (for the mean), but faster (for the median) with a smaller number of resamplings.
\begin{figure}
\center
\includegraphics[width=0.8\linewidth]{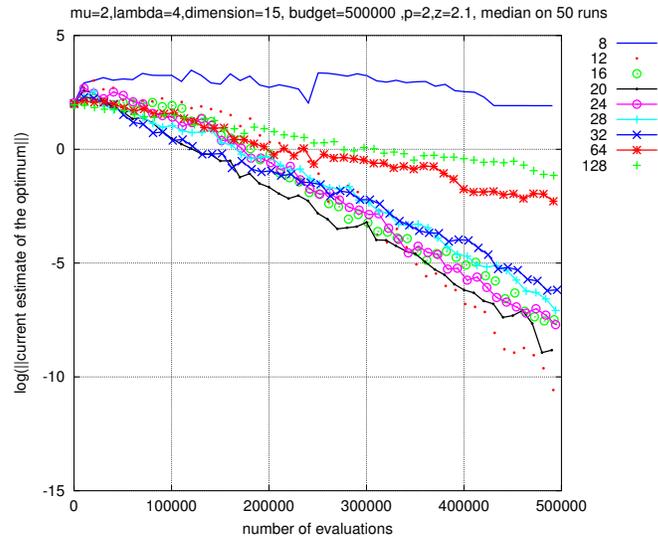}
\caption{\label{med}Convergence of Self-Adaptive Evolution Strategies: Median results.}
\end{figure}
\begin{figure}
\center
\includegraphics[width=0.8\linewidth]{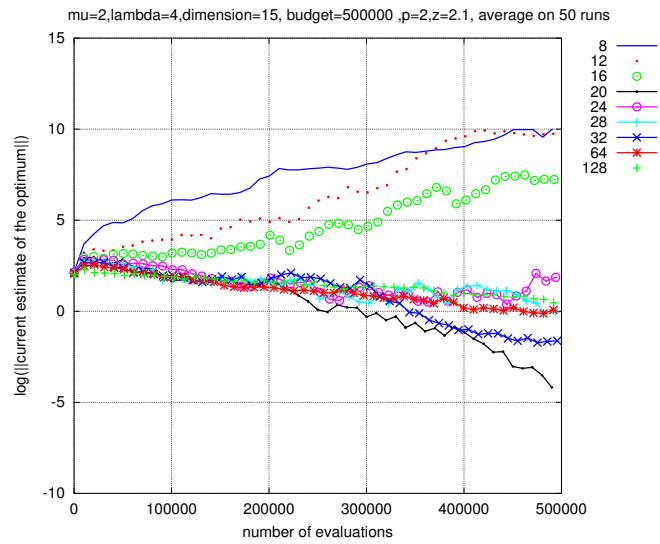}
\caption{\label{moy}Convergence of Self-Adaptive Evolution Strategies: Mean results.}
\end{figure}

\section{Conclusion}
We have shown that applying evolution strategies with a finite number of resamplings when the noise in the function decreases quickly enough near the optimum provides a convergence rate as fast as in the noise-free case. More specifically, if the noise decreases slightly faster than in the multiplicative model of noise, using a constant number of revaluation leads to a log-linear convergence of the algorithm. The limit case of a multiplicative noise has been analyzed in \cite{augerupu}; a fixed number of resamplings is not sufficient for convergence when the noise is unbounded.

{\bf{Further work.}} We did not provide any hint for choosing the number of resamplings. Proofs based on Bernstein races \cite{icmlbadtruc} might be used for adaptively choosing the number of resamplings.
\subsection*{Acknowledgements}
This paper was written during a stay in Ailab, Dong Hwa University, Hualien, Taiwan.

\bibliographystyle{abbrv}
\bibliography{tout,tout2,teytaud}

\begin{thebibliography}{10}

\bibitem{abinvestigation}
D.~Arnold and H.-G. Beyer.
\newblock Investigation of the $(\mu,\lambda)$-es in the presence of noise.
\newblock In {\em Proc. of the IEEE Conference on Evolutionary Computation
  ({CEC} 2001)}, pages 332--339. IEEE, 2001.

\bibitem{abnoise}
D.~Arnold and H.-G. Beyer.
\newblock Local performance of the (1 + 1)-es in a noisy environment.
\newblock {\em Evolutionary Computation, IEEE Transactions on}, 6(1):30 --41,
  feb 2002.

\bibitem{stocopti5}
D.~V. Arnold and H.-G. Beyer.
\newblock A general noise model and its effects on evolution strategy
  performance.
\newblock {\em IEEE Transactions on Evolutionary Computation}, 10(4):380--391,
  2006.

\bibitem{bignoise2}
S.~Astete-Morales, J.~Liu, and O.~Teytaud.
\newblock log-log convergence for noisy optimization.
\newblock In {\em Proceedings of EA 2013}, LLNCS, page accepted. Springer,
  2013.

\bibitem{TCSAnne04-corr}
A.~Auger.
\newblock Convergence results for (1,$\lambda$)-{SA}-{ES} using the theory of
  $\varphi$-irreducible {M}arkov chains.
\newblock {\em Theoretical Computer Science}, 334(1-3):35--69, 2005.

\bibitem{oneplusonepaper}
A.~Auger.
\newblock Linear convergence on positively homogeneous functions of a
  comparison-based step-size adaptive randomized search: the (1+1)-es with
  generalized one-fifth success rule.
\newblock {\em submitted}, 2013.

\bibitem{AJT}
A.~Auger, M.~Jebalia, and O.~Teytaud.
\newblock (x,sigma,eta) : quasi-random mutations for evolution strategies.
\newblock In {\em EA}, page 12p., 2005.

\bibitem{bey01}
H.-G. Beyer.
\newblock {\em The Theory of Evolution Strategies}.
\newblock Natural Computing Series. Springer, Heidelberg, 2001.

\bibitem{chen1988}
H.~Chen.
\newblock {Lower rate of convergence for locating the maximum of a function}.
\newblock {\em {Annals of statistics}}, 16:1330--1334, Sept. 1988.

\bibitem{clop}
R.~Coulom.
\newblock Clop: Confident local optimization for noisy black-box parameter
  tuning.
\newblock In {\em Advances in Computer Games}, pages 146--157. Springer Berlin
  Heidelberg, 2012.

\bibitem{fabian}
V.~Fabian.
\newblock {Stochastic Approximation of Minima with Improved Asymptotic Speed}.
\newblock {\em {Annals of Mathematical statistics}}, 38:191--200, 1967.

\bibitem{fabian2}
V.~Fabian.
\newblock {\em Stochastic Approximation}.
\newblock SLP. Department of Statistics and Probability, Michigan State
  University, 1971.

\bibitem{icmlbadtruc}
V.~Heidrich-Meisner and C.~Igel.
\newblock Hoeffding and bernstein races for selecting policies in evolutionary
  direct policy search.
\newblock In {\em ICML '09: Proceedings of the 26th Annual International
  Conference on Machine Learning}, pages 401--408, New York, NY, USA, 2009.
  ACM.

\bibitem{augerupu}
M.~Jebalia, A.~Auger, and N.~Hansen.
\newblock {Log linear convergence and divergence of the scale-invariant
  (1+1)-ES in noisy environments}.
\newblock {\em Algorithmica}, 2010.

\bibitem{Rechenberg}
I.~Rechenberg.
\newblock {\em Evolutionstrategie: Optimierung Technischer Systeme nach
  Prinzipien des Biologischen Evolution}.
\newblock Fromman-Holzboog Verlag, Stuttgart, 1973.

\bibitem{shamir}
O.~Shamir.
\newblock On the complexity of bandit and derivative-free stochastic convex
  optimization.
\newblock {\em CoRR}, abs/1209.2388, 2012.

\bibitem{decock}
O.~Teytaud and J.~Decock.
\newblock {Noisy Optimization Complexity}.
\newblock In {\em {FOGA - Foundations of Genetic Algorithms XII - 2013}},
  Adelaide, Australie, Feb. 2013.

\bibitem{fournierppsn}
O.~Teytaud and H.~Fournier.
\newblock Lower bounds for evolution strategies using vc-dimension.
\newblock In G.~Rudolph, T.~Jansen, S.~M. Lucas, C.~Poloni, and N.~Beume,
  editors, {\em PPSN}, volume 5199 of {\em Lecture Notes in Computer Science},
  pages 102--111. Springer, 2008.

\end{thebibliography}
  
\end{document}